\numberwithin{equation}{section}
\newcommand{\R}{\mathbb{R}}
\newcommand{\de}{\partial}
\newcommand{\debar}{\overline{\partial}}
\newcommand{\herm}{$(M^{2n},J,g)$ }
\newcommand{\bismut}{\nabla^B}
\newcommand{\riccib}{(\rho^B)^{(1,1)}}
\newcommand{\g}{\mathfrak{g}}
\newcommand{\xip}{\xi^\perp}
\DeclareMathAlphabet{\mathpzc}{OT1}{pzc}{m}{it}
\newtheorem{theorem}{Theorem}[section]
\newtheorem{prop}[theorem]{Proposition}
\newtheorem{lemma}[theorem]{Lemma}
\theoremstyle{definition}
\newtheorem{definition}[theorem]{Definition}
\newtheorem{example}[theorem]{Example}
\newtheorem{remark}[theorem]{Remark}
\begin{document}

\title[Static SKT metrics on Lie groups]{Static SKT metrics on Lie groups}
\author{Nicola Enrietti}
\subjclass[2010]{53C55, 32Q20, 53C30}
\thanks{}
\thanks{\textsc{Dipartimento di Matematica G. Peano, Universit\`a degli studi di Torino}}
\thanks{\textsc{Via Carlo Alberto 10, 10123 Turin, Italy}}
\thanks{\href{mailto:nicola.enrietti@unito.it}{nicola.enrietti@unito.it}}
\begin{abstract}\
An SKT metric is a Hermitian metric on a complex manifold whose fundamental 2-form $\omega$ satisfies $\de\debar\omega=0$. Streets and Tian introduced in \cite{sttiPlur} a Ricci-type flow that preserves the SKT condition. This flow uses the Ricci form associated to the Bismut connection, the unique Hermitian connection with totally skew-symmetric torsion, instead of the Levi-Civita connection. A SKT metric is static if the (1,1)-part of the Ricci form of the Bismut connection satisfies $\riccib=\lambda\omega$ for some real constant $\lambda$.
We study invariant static metrics on simply connected Lie groups, providing in particular a classification in dimension 4 and constructing new examples, both compact and non-compact, of static metrics in any dimension.
\end{abstract}

\maketitle

\section*{Introduction}

Let \herm be a Hermitian manifold of real dimension $2n$. We say that $g$ is \emph{Strong KT} (for short \emph{SKT}) or \emph{pluriclosed} if $\de\debar\omega =0$. This condition is strictly related to the Bismut connection \cite{bism,gaud}, which is the unique Hermitian connection whose torsion tensor is totally skew-symmetric.
SKT metrics were introduced in the context of type II string theory and $2$-dimensional supersymmetric $\sigma$-models \cite{ghr,stro}, and they have also relations with generalized K\"ahler geometry \cite{gua,fito1}. Moreover, Gauduchon \cite{gaud2} proved that for compact complex surfaces, one can find an SKT metric in the conformal class of any given Hermitian metric.

In \cite{sttiPlur,sttiPlur2} Streets and Tian introduced a parabolic flow of SKT metrics defined by
\[
\frac{\de\omega(t)}{\de t} = -\riccib,
\]
where $\rho^B$ is the Ricci form of the Bismut connection. This led to a definition of Einstein-like SKT metrics, called \emph{static}. More precisely, we say that an SKT metric $g$ on a complex manifold ($M,J$) is static if 
\[
-\riccib = \lambda \omega
\]              
for some $\lambda\in\R$. It is straightforward that every K\"ahler-Einstein metric is static, but the link between static metric and K\"ahler-Einstein metric is deeper. First of all, in \cite{sttiPlur}, it is shown that to any static metric with $\lambda\neq0$ we can associate a symplectic form that tames $J$, called Hermitian-symplectic in \cite{sttiPlur}. In \cite{lizh} it was proved that if $(M,J)$ is a compact complex surface, then the existence of a symplectic form that tames $J$ implies the existence of a K\"ahler metric on $(M,J)$. Moreover, a nilmanifold, i.e. the compact quotient of a nilpotent simply connected Lie group by a discrete subgroup, endowed with an invariant complex structure $J$ cannot admit any K\"ahler metric (\cite{bego,has}), and in \cite{efv} it is proved that it cannot admit any symplectic form that tames $J$, either. Indeed, it is still an open problem to find an example of a complex manifold admitting a symplectic form that tames $J$, but no K\"ahler structures. 

Secondly, all the examples of static metrics contained in \cite{sttiPlur} are K\"ahler-Einstein except the Hopf manifold, that admit a static metric with $\lambda=0$. In fact, we prove that on a compact K\"ahler manifold any static metric induces a K\"ahler-Einstein metric, so examples of non K\"ahler-Einstein static metrics on compact manifolds have to be found on non-K\"ahler manifolds.

\medskip

Our investigation concern in particular Lie groups and compact quotients of Lie groups by discrete subgoups. The study of SKT metrics on such manifolds was developed in \cite{fps,masw,efv,swan,uga}. 

First, we focus on nilmanifolds, and we prove that no invariant static metrics can be found on nilmanifods toghether with invariant complex structures (with the exception of tori).

Then we classify all the invariant static metrics on simply connected Lie groups of dimension 4, obtaining that the unique 4-dimensional Lie algebra that admit a non K\"ahler-Einstein invariant static metric is $\mathfrak{su}(2)\times\R$. This is not surprising, because it is the Lie algebra associated to the group $S^3\times S^1$, that is diffeomorphic to the Hopf manifold. So the Hopf manifold admits static metrics (induced by the invariant ones), that is the same result of \cite{sttiPlur}.

In the last section, we focus on static metrics with $\lambda=0$. First, we prove that if $(\g,J,g,D)$ is a Lie algebra together with a static metric $g$ with $\lambda=0$ and a flat Hermitian connection $D$ then on the tangent Lie algebra $T_D\,\g=\g\ltimes_D \R^{2n}$ we can produce a static metric with $\lambda=0$. Then we note that for every compact even-dimensional semisimple Lie group, the bi-invariant metric is a static metric with $\lambda=0$ with respect to any compatible complex structure, so we can apply the tangent Lie algebra construction to such groups, obtaining the first examples of compact and non-compact static metrics in (real) dimension greater than 4.
\medskip

\noindent\textbf{Acknowledgements.} The author is grateful to Anna Fino who proposed him the subject of this paper. He would also like to thank Sergio Console and Luigi Vezzoni for useful comments.

\bigskip

\section{Preliminaries}

We start by recalling some definitions and fixing some notation. Let $(M^{2n},J)$ be a complex manifold of real dimension $2n$ and $g$ be a Hermitian metric on $(M,J)$ with fundamental 2-form defined by $\omega(\cdot,\cdot)=g(\cdot,J\cdot)$. By \cite{gaud} there exist a unique connection $\nabla^B$ on $M$, called the \emph{Bismut connection}, such that $\bismut J=\bismut g=0$ and whose torsion $3$-form
\[
c(X,Y,Z)=g(X,T^B(Y,Z))
\]
is totally skew-symmetric. It is well known that $c=-Jd\omega$.

\begin{definition}
A Hermitian metric $g$ on a complex manifold $(M,J)$ is \emph{strong K\"ahler with torsion} or \emph{SKT} if the torsion 3-form $c$ of the Bismut connection $\bismut$ is closed, i.e. $dc=0$. This condition is equivalent to $\de\debar\omega=0$.
\end{definition}
Since $\bismut$ is a Hermitian connection, we can define the \emph{Ricci form} of $\bismut$ as
\[
\rho^B(X,Y) = \frac{1}{2}\sum_{k=1}^{2n} g(R^B(X,Y) \mathbf{e}_k,J \mathbf{e}_k),
\]
where $\{  \mathbf{e}_i \}$ is a local orthonormal frame of the tangent bundle $TM$ and $R^B$ is the curvature tensor of $\bismut$ defined by
\[
R^B(X,Y)Z=\nabla^B_{[X,Y]}Z-[\nabla^B_X,\nabla^B_Y]Z. 
\]
In the same way, we can define the Ricci form of the Chern connection, i.e. the unique Hermitian connection such that the $(1,1)$-part of the torsion tensor vanishes. The Ricci form of the Chern connection is related to the one of the Bismut connection by the formula (\cite{aliv,figr})
\begin{equation}\label{bischern}
\rho^B=\rho^C+dd^*\omega.
\end{equation}
In \cite{sttiPlur,sttiPlur2} Streets and Tian studied the evolution equation
\begin{equation}\label{flow}
\left\{
\begin{aligned}
&\frac{\de\omega(t)}{\de t} = -\riccib \\
&\omega(0)=\omega_0
\end{aligned}
\right.
\end{equation}
where $\riccib$ is the projection of $\rho^B$ on the bundle of (1,1)-forms. This flow preserves the SKT condition and is elliptic on the set of SKT metrics, so short-time existence of solutions is guaranteed. Moreover, if the initial condition $\omega_0$ is K\"ahler then \eqref{flow} coincides with the K\"ahler-Ricci flow \cite{cao}.

Equation \eqref{flow} allows to define an Einstein-like condition for SKT metrics.
\begin{definition}[\cite{sttiPlur}]
A Hermitian metric $g$ on a complex manifold $(M,J)$ is called \emph{static} if it is SKT and the Ricci tensor of the Bismut connection satisfies 
\begin{equation}\label{static}
-\riccib = \lambda \omega
\end{equation}
for some real constant $\lambda$.
\end{definition}

As pointed out in \cite{sttiPlur} static metrics with $\lambda\neq0$ carries additional structures. Indeed, if $\lambda\neq0$, then 
\[
-\frac{1}{\lambda}\rho^B(JX,X)>0\qquad \text{and}\qquad d\rho^B=0,
\]
so $-\frac{1}{\lambda}\rho^B$ is a symplectic form and tames the complex structures $J$.

\begin{theorem}
Let $(M,J)$ be a compact complex manifold, and suppose that it admits a K\"ahler metric. Then the existence of a static metric is equivalent to the existence of a K\"ahler-Einstein metric. In particular,
\begin{itemize}
\item If $g$ is a static metric with $\lambda\neq0$, then $g$ is itself a K\"ahler-Einstein metric;
\item If $g$ is a static metric with $\lambda=0$, then $(M,J)$ is Calabi-Yau manifold.
\end{itemize}
\end{theorem}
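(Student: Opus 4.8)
The plan is to prove both implications by reducing everything to the fixed cohomology class of the Bismut--Ricci form. First I would record the two structural facts that drive the argument. Since $dd^*\omega$ is exact, formula \eqref{bischern} shows that $\rho^B$ is a closed real $2$-form cohomologous to $\rho^C$, so $[\rho^B]=[\rho^C]=2\pi c_1(M)$. Moreover, because $M$ carries a K\"ahler metric, $c_1(M)_\R$ is a real class of type $(1,1)$; I fix once and for all a K\"ahler form $\tilde\omega$ and work with its Hodge theory, for which the $L^2$ inner product is orthogonal with respect to bidegree and harmonic forms are $\de$-, $\debar$- and $d$-closed. Writing $\alpha=(\rho^B)^{(2,0)}$, so that $\rho^B=\alpha+\riccib+\bar\alpha$, the condition $d\rho^B=0$ yields $\de\alpha=0$ and $\debar\alpha=-\de(\riccib)$.

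The clean case is $\lambda=0$, where $\riccib=0$ and hence $\rho^B$ has vanishing $(1,1)$-part. Let $h$ be the $\tilde\omega$-harmonic representative of $[\rho^B]=2\pi c_1(M)$; since this class is real of type $(1,1)$, $h$ is a real harmonic $(1,1)$-form and $\rho^B=h+d\gamma$ for some $1$-form $\gamma$. Taking $(1,1)$-parts gives $h=-(d\gamma)^{(1,1)}$, whence
\[
\|h\|_{L^2}^2=-\langle h,(d\gamma)^{(1,1)}\rangle=-\langle h,d\gamma\rangle=-\langle d^*h,\gamma\rangle=0,
\]
using type-orthogonality in the middle equality and $d^*h=0$ in the last. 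Therefore $[\rho^B]=0$, i.e. $c_1(M)_\R=0$, so $(M,J)$ is Calabi--Yau.

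For $\lambda\neq0$ the goal is to show that $g$ is itself K\"ahler, after which the conclusion is immediate: on a K\"ahler metric $\rho^B=\rho^C$ is the ordinary Ricci form and $-\rho^B=\lambda\omega$ is exactly the K\"ahler--Einstein equation. Here I would use the Hermitian--symplectic form $\Omega=-\tfrac1\lambda\rho^B$ recalled before the statement: it is closed, tames $J$, has $\Omega^{(1,1)}=\omega>0$, and $[\Omega]=-\tfrac{2\pi}{\lambda}c_1(M)\in H^{1,1}$. As above, the harmonic decomposition forces $\alpha=\Omega^{(2,0)}$ to be $\de$-exact, while closedness gives $\de\alpha=0$ and $\debar\alpha=\lambda\de\omega$, and the resulting identity $\lambda\,d\omega=d(\alpha+\bar\alpha)$ shows that $\omega$ is closed precisely when $\alpha=0$. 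The main obstacle is exactly this vanishing $\alpha=0$: unlike the case $\lambda=0$, the harmonic test here yields only the sign relation $-\lambda\langle\omega,h\rangle_{L^2}=\|h\|_{L^2}^2\ge0$, which is insufficient. I expect $\alpha=0$ to follow from an $L^2$/Bochner argument that genuinely combines the SKT hypothesis $\de\debar\omega=0$, the closedness of $\rho^B$, and the positivity $\omega>0$ on the compact K\"ahler background---for instance by first proving that $g$ is balanced, $d^*\omega=0$, and then invoking that a compact Hermitian metric which is simultaneously SKT and balanced must be K\"ahler. This analytic step is the technical heart of the theorem.

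Finally I would assemble the equivalence. The two cases show that the existence of a static metric forces either $g$ itself to be K\"ahler--Einstein (when $\lambda\neq0$) or $c_1(M)_\R=0$, in which case Yau's solution of the Calabi conjecture supplies a Ricci-flat, hence K\"ahler--Einstein, metric (when $\lambda=0$); this gives the implication from static to K\"ahler--Einstein. The converse is routine: any K\"ahler--Einstein metric is K\"ahler, hence SKT, and its Ricci form satisfies $\Ric=\mu g$, i.e. $-\rho^B=-\mu\omega$, so it is static with $\lambda=-\mu$.
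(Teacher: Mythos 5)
Your converse direction and your treatment of the case $\lambda=0$ are correct, and the $\lambda=0$ argument is genuinely different from the paper's: you test $\riccib=0$ against the harmonic representative $h$ of $[\rho^B]=[\rho^C]$ with respect to an auxiliary K\"ahler metric, and use type-orthogonality together with $d^*h=0$ to get $\|h\|_{L^2}^2=0$, hence $c_1=0$. The paper instead first upgrades $\rho^B$ to a form of type $(1,1)$ and then reads off $\rho^B=\riccib=0$. Your version is self-contained Hodge theory and is a perfectly good substitute for that half of the statement.

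The case $\lambda\neq0$, however --- the first bullet of the theorem --- is not proved. You correctly reduce it to the vanishing of $\alpha=(\rho^B)^{(2,0)}$, observe that the harmonic test only yields the inconclusive relation $-\lambda\langle\omega,h\rangle_{L^2}=\|h\|_{L^2}^2\geqslant0$, and then defer the vanishing to a hoped-for ``$L^2$/Bochner argument'', for instance proving that $g$ is balanced and invoking the fact that a compact Hermitian metric which is simultaneously SKT and balanced is K\"ahler. Even granting that auxiliary implication, you give no argument whatsoever that $d^*\omega=0$ for a static metric, and nothing in your setup produces it; as you say yourself, this step is the technical heart, and it is missing. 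That is a genuine gap, not a routine verification.

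What closes the gap in the paper is a single observation you never exploit, and it requires no Bochner technique at all: by \eqref{bischern}, $\rho^B-\rho^C=dd^*\omega$ is $d$-exact, and since $M$ admits a K\"ahler metric the $\de\debar$-lemma allows one to write $dd^*\omega=\de\debar f$ for a function $f$; in particular $dd^*\omega$ is of type $(1,1)$, and since $\rho^C\in\Omega^{1,1}(M)$ this forces $\rho^B\in\Omega^{1,1}(M)$, which is exactly your statement $\alpha=0$. Then Lemma \ref{lemma} (whose one-line proof you essentially rederive) finishes: $\omega=-\frac{1}{\lambda}\rho^B$ is closed because $\rho^B$ is, so $g$ is K\"ahler and the static equation becomes the K\"ahler--Einstein equation. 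Note that the same observation also makes the $\lambda=0$ bullet immediate, since then $\rho^B=\riccib=0$; so the one step your proposal lacks is in fact the whole content of the paper's proof.
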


\begin{lemma}\label{lemma}
Let $(M,J)$ be a complex manifold and $g$ a static metric with $\lambda\neq 0$ such that $\rho^B\in\Omega^{1,1}(M)$. Then $g$ is K\"ahler-Einstein.
\end{lemma}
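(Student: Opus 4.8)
The plan is to use the extra hypothesis $\rho^B\in\Omega^{1,1}(M)$ to promote the static equation into an identity for $\rho^B$ itself, and then to differentiate. Since $\rho^B$ has type $(1,1)$ we have $\riccib=\rho^B$, so the static condition \eqref{static} becomes $\rho^B=-\lambda\omega$. I would then prove the two assertions in turn: first that $g$ is Kähler, i.e. $d\omega=0$, and afterwards that it is Einstein.

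For the Kähler part, the main observation is that $\rho^B$ is automatically closed. Indeed, in \eqref{bischern} the Chern Ricci form $\rho^C$ is a closed $(1,1)$-form (locally $\rho^C=-i\de\debar\log\det g$), while $dd^*\omega$ is exact; hence $d\rho^B=d\rho^C+d\,dd^*\omega=0$. Applying $d$ to $\rho^B=-\lambda\omega$ now yields $0=d\rho^B=-\lambda\,d\omega$, and since $\lambda\neq0$ we conclude $d\omega=0$. This is precisely the step where the hypothesis $\rho^B\in\Omega^{1,1}(M)$ is used: without it one only knows $\riccib=-\lambda\omega$, and the $(2,0)+(0,2)$ component of $\rho^B$ need not be closed, so $d\,\riccib$ could not be replaced by $d\rho^B$.

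Once $d\omega=0$, the Bismut torsion form vanishes, $c=-Jd\omega=0$, so $\bismut$ is torsion-free and therefore coincides with the Levi-Civita connection (which is Hermitian in the Kähler case). Hence $R^B$ is the Riemannian curvature and $\rho^B$ is the ordinary Ricci form $\rho$ of the Kähler metric, so $-\rho=\lambda\omega$ is a Kähler-Einstein equation $\Ric=\mu\,g$ for a real constant $\mu$; thus $g$ is Kähler-Einstein, as claimed. I do not expect any serious obstacle here: beyond the standard facts that $\rho^C$ is closed and that the Bismut connection reduces to the Levi-Civita connection in the Kähler case, the argument is a single differentiation, the only delicate point being the type bookkeeping that makes the $(1,1)$-hypothesis indispensable.
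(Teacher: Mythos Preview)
Your argument is correct and coincides with the paper's proof: the paper also uses $\rho^B\in\Omega^{1,1}(M)$ to rewrite the static equation as $\omega=-\frac{1}{\lambda}\rho^B$, then invokes \eqref{bischern} to see that $\rho^B$ is closed, hence $d\omega=0$. The paper stops there, leaving the Einstein part implicit, while you spell it out; but the substance is identical.
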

\begin{proof}
If $g$ is a static metric with $\lambda\neq0$ and $\rho^B\in\Omega^{1,1}(M)$, then $\omega=-\frac{1}{\lambda}\rho^B$. But $\rho^B=dd^*\omega+\rho^C$ is closed, so $d\omega=0$.
\end{proof}

\begin{proof}[Proof of Theorem 0.1]
Clearly, if $g$ is a K\"ahler-Einstein metric, it satisfies the static condition.

Now suppose that $g$ is a static metric. Since $(M,J)$ admits a K\"ahler metric, the $\de\debar$-lemma holds. Then there is a function $f$ on $M$ such that $dd^*\omega=\de\debar f$, so $dd^*\omega$ is of type $(1,1)$. Therefore $\rho^B\in\Omega^{1,1}(M)$. 

If $\lambda\neq0$, appllying Lemma \ref{lemma} we have that $g$ is K\"ahler-Einstein.

If $\lambda=0$, then $\rho^B=0$. But in general $[\rho^B]=[\rho^C]=c_1\in H^2(M,\R)$, so $c_1=0$, and since $(M,J)$ admits a K\"ahler metric it is Calabi-Yau.
\end{proof}

\begin{remark}
Lemma \ref{lemma} holds for any complex manifold, either compact or non-compact.
\end{remark}

\bigskip

\section{Nilmanifolds}\ 

We recall that a \emph{nilmanifold} is the compact quotient of a simply connected nilpotent Lie group $G$ by a discrete subgroup $\Gamma$. By invariant Riemannian metric (respectively complex structure) on $G/\Gamma$ we mean the one induced by an inner product (respectively complex structure) on the Lie algebra $\g$ of $G$. It is well known that a nilmanifold cannot admit any K\"ahler metric unless it is a torus (see for example \cite{bego,has}), and results about classification of SKT metrics on nilmanifolds have been found in \cite{fps,efv}. Moreover, in \cite{efv} it is proved that a nilmanifold (not a torus) together with an invariant complex structure $J$ cannot admit any symplectic form taming $J$, so in particular we cannot find any static metric with $\lambda\neq0$. We wonder what happens for $\lambda=0$.

Since we are considering invariant metrics, we can work on nilpotent Lie algebras. We recall that a Lie algebra $\g$ is \emph{nilpotent} if the descending central series $\{ \g^k \}_{k\geqslant0}$ defined by
\[
\g^0=\g,\quad \g^1=[\g,\g] \quad \dots  \quad \g^k=[\g^{k-1},\g]
\]
vanishes for some $k>0$. By \cite{efv} any SKT-nilpotent Lie algebra $\g$ is 2-step (i.e. $\g^2=\{0\}$) and its center is $J$-invariant; therefore we can split $\g$ in $\xi\oplus\xi^\perp$, where $\xi$ is the center, $\xip$ the orthogonal complement to the center with respect to the SKT metric and $[\xip,\xip]\subset\xi$, so for every $X\in\g$ we have a unique decomposition $X=X^\xi+X^\perp$, where $X^\xi\in\xi$ and $X^\perp\in\xip$.

In the following lemmas we make some calculations about the Bismut connection and the SKT condition:

\begin{lemma}\label{lemma1}
 Let $\g$ be a nilpotent Lie algebra together with a complex structure $J$ and a $J$-Hermitian SKT metric $g$, and $\bismut$ its Bismut connection. Then for any $X,Y\in\g$
 \begin{enumerate}[i)]
  \item\label{lemma1.1} $\bismut_{X^\xi}Y^\xi=0$; \vspace{1,5mm}   
  \item\label{lemma1.2} $\bismut_{X^\xi}Y^\perp\in\xip$ and $g(\bismut_{X^\xi}Y^\perp,Z)=-\frac{1}{2}g([Y^\perp,Z]+[JY^\perp,JZ],X^\xi);$\vspace{1,5mm}   
  \item\label{lemma1.3} $\bismut_{X^\perp}Y^\xi\in\xip$ and $g(\bismut_{X^\perp}Y^\xi,Z)=-\frac{1}{2}g([X^\perp,Z]-[JX^\perp,JZ],Y^\xi).$ Moreover,
   \begin{equation}\label{eqlemma1}
    J\bismut_{JX^\perp}Y^\xi = \bismut_{X^\perp}Y^\xi
   \end{equation}
  \item\label{lemma1.4} $\bismut_{X^\perp}Y^\perp=\frac{1}{2}([X^\perp,Y^\perp]-[JX^\perp,JY^\perp]) \in\xi$.
 \end{enumerate}

\end{lemma}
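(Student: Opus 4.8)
The plan is to compute $\bismut$ explicitly from the decomposition $g(\bismut_X Y,Z) = g(\nabla^{LC}_X Y,Z) + \frac{1}{2}c(X,Y,Z)$, where $c=-Jd\omega$ is the totally skew-symmetric torsion $3$-form. Since all tensors involved are left-invariant, $\nabla^{LC}$ is given by the Koszul formula $2g(\nabla^{LC}_X Y,Z)=g([X,Y],Z)-g([Y,Z],X)+g([Z,X],Y)$, and $d\omega$ by the Chevalley--Eilenberg formula $d\omega(X,Y,Z)=-\omega([X,Y],Z)-\omega([Y,Z],X)-\omega([Z,X],Y)$; together with $\omega(\cdot,\cdot)=g(\cdot,J\cdot)$ this expresses $c(X,Y,Z)$ entirely through brackets and $g$. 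Thus every statement reduces to a bracket computation, and the proof becomes a bookkeeping of which terms survive.

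The structural facts doing the work are those recalled just before the lemma: $\xi$ is the center and is $J$-invariant, hence $\xip$ is $J$-invariant as well (the metric being Hermitian); and $\g$ is $2$-step, so $[\g,\g]\subseteq\xi$, in particular $[\xip,\xip]\subseteq\xi$. I would record at the outset the single observation that powers the whole argument: any bracket with at least one entry in $\xip$ lands in $\xi$ --- either because the other entry is central, or by $2$-step nilpotency --- while central vectors are $g$-orthogonal to $\xip$. Consequently, in both the Koszul and the torsion expansions a term $g([\,\cdot\,,\cdot],W)$ vanishes whenever the bracket is forced into $\xi$ and $W\in\xip$, or whenever an entry of the bracket is central (making the bracket zero).

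With this, the four cases collapse quickly. In \emph{(i)} every bracket vanishes since $X^\xi,Y^\xi$ and their $J$-images are central, so $\bismut_{X^\xi}Y^\xi=0$. In \emph{(ii)} and \emph{(iii)} the only surviving contributions are those pairing a bracket against the central argument $X^\xi$ (resp. $Y^\xi$); collecting the Koszul and torsion terms and rewriting $\omega(\cdot,J\cdot)=-g(\cdot,\cdot)$ via $J^2=-1$ reproduces exactly the stated formulas, and testing against $Z\in\xi$ gives zero, so the result is orthogonal to $\xi$, i.e. lies in $\xip$. Identity \eqref{eqlemma1} then follows by substituting $X^\perp\mapsto JX^\perp$ in the formula of \emph{(iii)} and using $g(J\,\cdot,\cdot)=-g(\cdot,J\,\cdot)$ together with $J^2=-1$. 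In \emph{(iv)} the observation above kills every term except the two pairing the central brackets $[X^\perp,Y^\perp]$ and $[JX^\perp,JY^\perp]$ against $Z$, giving $2g(\bismut_{X^\perp}Y^\perp,Z)=g([X^\perp,Y^\perp]-[JX^\perp,JY^\perp],Z)$; since both brackets lie in $\xi$, this identifies $\bismut_{X^\perp}Y^\perp$ as the central vector $\frac12([X^\perp,Y^\perp]-[JX^\perp,JY^\perp])$.

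I do not expect a genuine obstacle: the content sits entirely in the vanishing observation of the second paragraph, after which each identity is a two- or three-line expansion. The only real care is to fix, once and for all, the sign conventions for the action of $J$ on forms and for $d$ on a Lie algebra, since an inconsistent choice flips the sign of the torsion term and spoils the match with the stated formulas; I would pin these down so that $c=-Jd\omega$ reproduces $g(\bismut_{X^\xi}Y^\perp,Z)=-\frac12 g([Y^\perp,Z]+[JY^\perp,JZ],X^\xi)$ in case \emph{(ii)}, and keep the bookkeeping of which brackets lie in $\xi$ versus $\xip$ scrupulously consistent thereafter.
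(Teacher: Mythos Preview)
Your approach is correct and essentially the same as the paper's: the paper quotes from \cite{dofi} the closed formula
\[
g(\bismut_XY,Z)=\tfrac12\big\{g([X,Y]-[JX,JY],Z)-g([Y,Z]+[JY,JZ],X)-g([X,Z]-[JX,JZ],Y)\big\},
\]
which is exactly what your Koszul-plus-torsion expansion produces, and then reads off (i)--(iv) from the fact that $\xi$ is the ($J$-invariant) center and $\g$ is $2$-step. Your derivation of \eqref{eqlemma1} by substituting $X^\perp\mapsto JX^\perp$ and using $J^2=-1$ with $g(J\cdot,\cdot)=-g(\cdot,J\cdot)$ is fine; the paper phrases this as ``using the first part of (iii) and the integrability of $J$'', but integrability has already been absorbed into the bracket formula for $\bismut$ (equivalently, into $c=-Jd\omega$), so no extra ingredient is hidden there.
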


\begin{proof}
In view of \cite{dofi} we can write the Bismut connection in terms of Lie brackets as
\begin{equation}\label{eqbismut}
 \tilde g(\bismut_XY,Z) = \frac{1}{2} \Big\{ \tilde{g}([X,Y]-[JX,JY],Z) - \tilde{g}([Y,Z]+[JY,JZ],X) - \tilde{g}([X,Z]-[JX,JZ],Y) \Big\}.
\end{equation}
Relations \emph{(\ref{lemma1.1}),(\ref{lemma1.2}),(\ref{lemma1.4})} and the first part of \emph{(\ref{lemma1.3})} comes directly by using the definition of $\xi$. Equation \eqref{eqlemma1} can be obtained using the first part of \emph{(\ref{lemma1.3})} and the integrability of $J$.

\end{proof}

\begin{lemma}\label{lemma2}
 Let $\g$ be a nilpotent Lie algebra together with a complex structure $J$ and a $J$-Hermitian SKT metric $g$. Then  
 \[
  g([X,JX],[Y,JY])= \frac{1}{2}\big( \Vert [X,Y] \Vert^2 + \Vert [X,JY] \Vert^2 + \Vert [JX,Y] \Vert^2 + \Vert [JX,JY] \Vert^2 \big) 
 \]
for every $X,Y\in\g$.
\end{lemma}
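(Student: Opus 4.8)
The plan is to rewrite the SKT condition $dc=0$ as a single quadrilinear identity among Lie brackets and then specialize it. First I would reduce to the case $X,Y\in\xip$. Since $\g$ is $2$-step nilpotent with $J$-invariant center $\xi$ and the metric is Hermitian, the orthogonal complement $\xip$ is also $J$-invariant, so $(JX)^\perp=JX^\perp$; moreover every bracket lands in $\xi$ and the central components $X^\xi,\,JX^\xi,\,Y^\xi,\,JY^\xi$ are killed by all brackets. Hence $[X,JX]=[X^\perp,JX^\perp]$ and likewise for $[X,Y],[X,JY],[JX,Y],[JX,JY]$, and it suffices to prove the identity for $X^\perp,Y^\perp\in\xip$.

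Next I would write out the SKT condition explicitly in terms of brackets. Starting from $c=-Jd\omega$ and the Chevalley--Eilenberg formula for $d\omega$, a short computation gives
\[
c(P,Q,R)=\pm\big(g([JP,JQ],R)-g([JP,JR],Q)+g([JQ,JR],P)\big).
\]
Because every bracket lies in the central subspace $\xi$, the form $c$ is supported on the pattern ``two arguments in $\xip$, one in $\xi$''; in particular, plugging a bracket $[X_i,X_j]\in\xi$ into the first slot yields $c([X_i,X_j],X_k,X_l)=\pm\,g([JX_k,JX_l],[X_i,X_j])$. Expanding $dc$ on four vectors $A,B,C,D\in\xip$ (all other combinations vanish) collapses the $4$-form into the bracket identity
\begin{equation*}
g([A,B],[JC,JD])+g([JA,JB],[C,D])-g([A,C],[JB,JD])-g([JA,JC],[B,D])+g([A,D],[JB,JC])+g([JA,JD],[B,C])=0.
\end{equation*}

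With this identity established, the lemma follows from the substitution $(A,B,C,D)=(X,JX,Y,JY)$. Using $J^2=-\mathrm{id}$ and the symmetry of $g$, and absorbing the signs from $[JY,-Y]=[Y,JY]$ and $[JX,-X]=[X,JX]$, the first two terms each reduce to $g([X,JX],[Y,JY])$, while the four remaining terms become $-\Vert[X,Y]\Vert^2,\,-\Vert[JX,JY]\Vert^2,\,-\Vert[X,JY]\Vert^2,\,-\Vert[JX,Y]\Vert^2$. The resulting equation is $2\,g([X,JX],[Y,JY])=\Vert[X,Y]\Vert^2+\Vert[X,JY]\Vert^2+\Vert[JX,Y]\Vert^2+\Vert[JX,JY]\Vert^2$, which is exactly the claim.

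The main obstacle is the sign bookkeeping in the middle step: correctly combining the Chevalley--Eilenberg differential of the $3$-form $c$ with the translation $c=-Jd\omega$, and verifying that the only surviving components of $dc$ are those on $\Lambda^4\xip$ (all others die because either a bracket vanishes or it gets paired with a $\xip$-vector in $c$). Once the displayed bracket identity is obtained with the right signs, the final specialization is a short, purely formal manipulation, so I expect no further difficulty there.
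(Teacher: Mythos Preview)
Your proposal is correct and follows essentially the same route as the paper: both proofs reduce to $X,Y\in\xip$, use the bracket expression for $c$ to see that $c([P,Q],R,S)=-g([JR,JS],[P,Q])$ when $[P,Q]\in\xi$, and then evaluate $dc=0$ on the quadruple $X,JX,Y,JY$. The only cosmetic difference is that you first record the general six-term identity for $dc(A,B,C,D)=0$ and then specialize, whereas the paper plugs in $(X,Y,JX,JY)$ directly and simplifies term by term.
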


\begin{proof}
 If $X$ or $Y$ belongs to the center, then the lemma is obviously true; so we consider the case $X,Y\in\xip$. We can write $c$ in terms of Lie brackets as
\begin{equation}\label{eqtorsion}
 c(X,Y,Z)=-g([JX,JY],Z)-g([JY,JZ],X)-g([JZ,JX],Y), 
\end{equation}
then
\[
\begin{split}
 0 = dc(X,Y,JX,JY) =&\, -c([X,Y], JX, JY) + c([X, JX], Y, JY) - c([X, JY], Y, JX)- \\
                    &\,- c([Y, JX], X, JY) + c([Y, JY], X, JX) - c([JX, JY], X, Y)\\
                   =&\, +g ([X,Y], [X,Y]) -  g([Y, JY], [X, JX])+g([X, JY], [X, JY])  \\
                    &\, + g ([Y, JX], [Y, JX])- g([Y, JY], [X, JX]) + g([JX, JY], [JX, JY]) \\
                   =&\, -2g([X, JX], [Y, JY]) + \Vert [X,Y] \Vert^2 + \Vert [X,JY] \Vert^2 + \Vert [JX,Y] \Vert^2+ \\
                    &\,  + \Vert [JX,JY] \Vert^2 \\
\end{split}
\]
as required.

\end{proof}

Now we are ready to prove the following

\begin{theorem}
Let $G/\Gamma$ a nilmanifold (not a torus) together with an invariant complex structure $J$. Then it does not admit any $J$-Hermitian invariant static metric with $\lambda=0$.
\end{theorem}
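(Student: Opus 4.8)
The plan is to turn the static condition $\lambda=0$ into a pointwise statement about Lie brackets and then use the structural Lemmas \ref{lemma1} and \ref{lemma2} to force all brackets to vanish. First I would record that, writing the $(1,1)$-part as $\riccib(X,Y)=\tfrac12(\rho^B(X,Y)+\rho^B(JX,JY))$ and specializing to $Y=JX$, antisymmetry gives $\riccib(X,JX)=\tfrac12(\rho^B(X,JX)+\rho^B(JX,-X))=\rho^B(X,JX)$. Hence the hypothesis $\riccib=0$ is equivalent on the diagonal to $\rho^B(X,JX)=0$ for every $X\in\g$. By \cite{efv} the underlying Lie algebra is $2$-step nilpotent with $J$-invariant centre $\xi$, and $\g=\xi\oplus\xip$ with $[\xip,\xip]\subset\xi$ the only possibly nonzero brackets; so it suffices to show that $\rho^B(X,JX)=0$ for all $X\in\xip$ forces $[\xip,\xip]=0$.

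Next I would compute $\rho^B(X,JX)$ for $X\in\xip$ directly from $\rho^B(X,JX)=\tfrac12\sum_k g(R^B(X,JX)\mathbf e_k,J\mathbf e_k)$, using a $J$-adapted orthonormal basis split along $\xi\oplus\xip$ together with the explicit components of Lemma \ref{lemma1}. The key structural fact is that parts \emph{(\ref{lemma1.1})--(\ref{lemma1.4})} show $\bismut_{X}$ (for $X\in\xip$) interchanges $\xi$ and $\xip$, while $\bismut_{X^\xi}$ preserves each summand. Feeding this into $R^B(X,JX)=\bismut_{[X,JX]}-[\bismut_X,\bismut_{JX}]$, and noting $[X,JX]\in\xi$, only finitely many terms survive and each is a Lie bracket. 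Thus $\rho^B(X,JX)$ reduces to an explicit quadratic expression in the brackets $[X,\mathbf e_k],[JX,\mathbf e_k]$ together with a ``trace'' term $\sum_a[X_a,JX_a]$ coming from the $\bismut_{[X,JX]}$ summand; the integrability relation \eqref{eqlemma1} and the Hermitian symmetry of $\bismut$ are what make the surviving terms pair up.

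The decisive step is showing this quadratic expression has a definite sign, and here Lemma \ref{lemma2} is the tool: it rewrites each mixed product $g([X,JX],[Y,JY])$ as the manifestly nonnegative sum $\tfrac12\big(\Vert[X,Y]\Vert^2+\Vert[X,JY]\Vert^2+\Vert[JX,Y]\Vert^2+\Vert[JX,JY]\Vert^2\big)$. Substituting this for the cross terms produced above should collapse $\rho^B(X,JX)$ into a single definite multiple of quantities of the form $\Vert[X,\mathbf e_k]\Vert^2$, so that $\rho^B(X,JX)=0$ forces $[X,\mathbf e_k]=0$ for all $k$, i.e.\ $\mathrm{ad}_X=0$ on $\xip$. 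Letting $X$ range over a basis of $\xip$ then gives $[\xip,\xip]=0$, whence $\g$ is abelian and $G/\Gamma$ is a torus, contradicting the hypothesis.

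I expect the main obstacle to be precisely the sign bookkeeping in the two preceding steps: the curvature of $\bismut$ splits into a ``horizontal'' contribution (frame vectors in $\xip$) and a ``vertical'' one (frame vectors in $\xi$) that a priori look to enter with opposite signs, and definiteness only emerges after applying Lemma \ref{lemma2} together with the convention-dependent sign in $R^B(X,Y)=\bismut_{[X,Y]}-[\bismut_X,\bismut_Y]$. A useful independent check on the sign is the cohomological route: $\riccib=0$ yields $\langle\rho^B,\omega\rangle=0$, while $\rho^B=\rho^C+dd^*\omega$ with $\langle\rho^C,\omega\rangle=0$ (the constant invariant Chern scalar curvature integrates to a multiple of $c_1\cup[\omega]^{n-1}=0$ on a nilmanifold), so $\Vert d^*\omega\Vert^2=0$; the metric is then balanced as well as SKT, which on a nilmanifold forces it to be Kähler and hence toral.
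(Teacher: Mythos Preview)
Your main argument is the paper's own: reduce to $\rho^B(X,JX)$ for $X\in\xip$, split the trace along $\xi\oplus\xip$ using Lemma~\ref{lemma1}, and then apply Lemma~\ref{lemma2} to obtain a strictly positive sum of squared brackets. The one refinement worth recording is that the horizontal and vertical ``norm'' contributions cancel \emph{exactly} before Lemma~\ref{lemma2} is invoked: the $\xip$-block gives $-\tfrac12\sum_i\Vert[X,\mathbf e_i]-[JX,J\mathbf e_i]\Vert^2+\sum_i g([X,JX],[\mathbf e_i,J\mathbf e_i])$ while the $\xi$-block gives $+\tfrac12\sum_i\Vert[X,\mathbf e_i]-[JX,J\mathbf e_i]\Vert^2$, so only the cross term $\tfrac12\sum_i g([X,JX],[\mathbf e_i,J\mathbf e_i])$ survives and Lemma~\ref{lemma2} converts it directly into $\tfrac14\sum_i(\Vert[X,\mathbf e_i]\Vert^2+\cdots)>0$. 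Your anticipation of the sign issue and its resolution is therefore correct, and the contrapositive phrasing (vanishing forces $[\xip,\xip]=0$) is equivalent to the paper's positivity statement.

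Your closing ``cohomological check'' is a genuinely different route, but as written it has a gap. The step $\langle\rho^C,\omega\rangle=0$ does not follow from $c_1=0$ alone: the identification of $\int_M\rho^C\wedge\omega^{n-1}$ with a cup product against $[\omega]^{n-1}$ requires $\omega^{n-1}$ to be closed, and SKT does not imply Gauduchon once $n>2$. Writing $\rho^C=d\beta$ (using $c_1=0$) and integrating by parts leaves $\pm\int_M\beta\wedge d\omega^{n-1}$, which has no a priori reason to vanish. To salvage this argument you would need an independent proof that the invariant Chern scalar curvature vanishes on a $2$-step nilpotent Lie algebra, and then cite a result that SKT $+$ balanced forces K\"ahler in this setting; neither step is covered by the $c_1\cup[\omega]^{n-1}$ reasoning you give.
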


\begin{proof}
Let $\g$ the Lie algebra of $G$, $\tilde J$ the induced complex structure and $g$ a $\tilde J$-Hermitian SKT metric; we have $\g=\xi\oplus\xip$. Choose $\{  \mathbf{e}_1,\dots, \mathbf{e}_{2m} \}$ and $\{  \mathbf{f}_1,\dots, \mathbf{f}_{2k} \}$ to be orthonormal basis respectively of $\xip$ and $\xi$ with $2m+2k=2n=\dim\g$; then $\{  \mathbf{e}_1,\dots, \mathbf{e}_{2m}, \mathbf{f}_1,\dots, \mathbf{f}_{2k} \}$ is an orthonormal basis of $\g$. Note that $\riccib(X,\tilde JX)=\rho^B(X,\tilde JX)$, so in order to prove that $\riccib\neq 0$ we will show that $\rho^B(X,\tilde JX)$ is not zero for some $X\in\g$.

Suppose $X\in\xip$; by definition,
\[
\rho^B(X,\tilde JX) = \frac{1}{2}\Big( \sum_{i=1}^{2m} g(R^B(X,\tilde JX) \mathbf{e}_i,\tilde J \mathbf{e}_i) + \sum_{j=1}^{2k} g(R^B(X,\tilde JX) \mathbf{f}_j,\tilde J \mathbf{f}_j)\, \Big);
\]
we consider the two summations separately.
\begin{itemize}

\item By definition of $R^B$, we obtain
 \[
  g(R^B(X,\tilde JX) \mathbf{e}_i,\tilde J \mathbf{e}_i) = g(\bismut_X\bismut_{\tilde JX} \mathbf{e}_i,\tilde J \mathbf{e}_i)-g(\bismut_{\tilde JX}\bismut_X  \mathbf{e}_i,\tilde J \mathbf{e}_i) -g(\bismut_{[X,\tilde JX]} \mathbf{e}_i,\tilde J \mathbf{e}_i).
 \]
Applying Lemma \ref{lemma1} and using the integrability of $\tilde J$ we have
\[
 g(\bismut_X\bismut_{\tilde JX} \mathbf{e}_i,\tilde J \mathbf{e}_i) = -g(\bismut_{\tilde JX}\bismut_X  \mathbf{e}_i,\tilde J \mathbf{e}_i) = -\frac{1}{4}\Vert [X, \mathbf{e}_i]-[\tilde JX,\tilde J \mathbf{e}_i] \Vert^2
\]
and
\[
 g(\bismut_{[X,\tilde JX]} \mathbf{e}_i,\tilde J \mathbf{e}_i) = -g([X,\tilde JX],[ \mathbf{e}_i,\tilde J \mathbf{e}_i]).
\]
Hence
\begin{equation}\label{eqnilp1}
 g(R^B(X,\tilde JX) \mathbf{e}_i,\tilde J \mathbf{e}_i) = -\frac{1}{2}\Vert [X, \mathbf{e}_i]-[\tilde JX,\tilde J \mathbf{e}_i] \Vert^2 + g([X,\tilde JX],[ \mathbf{e}_i,\tilde J \mathbf{e}_i]).
\end{equation}

\item Again by definition of $R^B$ and applying Lemma \ref{lemma1} and equation \eqref{eqlemma1}, we obtain
 \begin{align*}
   g(R^B(X,\tilde JX) \mathbf{f}_j,\tilde J \mathbf{f}_j) =&\, g(\bismut_X\bismut_{\tilde JX} \mathbf{f}_j,\tilde J \mathbf{f}_j)-g(\bismut_{\tilde JX}\bismut_X \mathbf{f}_j,\tilde J \mathbf{f}_j) \\
                        =&\, \frac{1}{2} g([X,\bismut_{\tilde JX} \mathbf{f}_j-\tilde J\bismut_X \mathbf{f}_j] - [\tilde JX,\tilde J\bismut_{\tilde JX} \mathbf{f}_j+\bismut_X \mathbf{f}_j],\tilde J \mathbf{f}_j) \\
                        =&\, g( [X,\bismut_{\tilde JX} \mathbf{f}_j] - [\tilde JX,\tilde J\bismut_{\tilde JX} \mathbf{f}_j] ,\tilde J \mathbf{f}_j).
 \end{align*}
By decomposing $\bismut_{\tilde JX} \mathbf{f}_j$ in components with respect to the basis $\{ \mathbf{e}_i \}$ of $\xip$ we compute that
\begin{equation}\label{eqnilp2}
 \sum_{j=1}^{2k} g(R^B(X,\tilde JX) \mathbf{f}_j,\tilde J \mathbf{f}_j) = \frac{1}{2}\sum_{i=1}^{2m} \Vert [X, \mathbf{e}_i]-[\tilde JX,\tilde J \mathbf{e}_i] \Vert^2.
\end{equation}
 
\end{itemize}

Combining equations \eqref{eqnilp1} and \eqref{eqnilp2} we obtain
\begin{align*}
 \rho^B(X,\tilde JX) = &\,\frac{1}{2}\sum_{i=1}^{2m} g([X,\tilde JX],[ \mathbf{e}_i,\tilde J \mathbf{e}_i]) \\
\intertext{and using Lemma \ref{lemma2}}
              =&\, \frac{1}{4}\sum_{i=1}^{2m} \big(\, \Vert [X, \mathbf{e}_i] \Vert^2 + \Vert [X,\tilde J \mathbf{e}_i] \Vert^2 + \Vert [\tilde JX, \mathbf{e}_i] \Vert^2 + \Vert [\tilde JX,\tilde J \mathbf{e}_i] \Vert^2\, \big) > 0
\end{align*}
since $X\in\xip$; this concludes the proof.

\end{proof}

The results of this section can be summarized as follows: \emph{let $G/\Gamma$ be a nilmanifold (not a torus) endowed with an invariant complex structure $J$ and with a $J$-invariant SKT metric $g$; then, if $g$ is a static metric, it must be non-invariant and $\lambda$ must be zero.}
Whether such metrics exists is still not known, but a possible approach to the problem could be the following: let $g$ be a non-invariant Hermitian metric on $(G/\Gamma,J)$, with $J$ invariant. By \cite{miln} $G/\Gamma$ has a bi-invariant volume form $d\mu$, and applying the symmetrization process of \cite{belg} we can construct a new invariant $J$-Hermitian metric $\tilde g$ by posing
\[
\tilde g (X,Y) = \int_{m\in M} g_m(X_m,Y_m) d\mu
\]
for any left-invariant vector fields $X,Y$. Moreover, in \cite{uga} it was proved that if the metric $g$ is SKT, then $\tilde g$ is still SKT.
Thus, if a nilmanifold admits a non-invariant static metric with $\lambda=0$, then it induces an invariant SKT metric $\tilde g$. In general, however, it is not true that the Ricci form $\tilde \rho^B$ of the metric $\tilde g$ is obtained by the symmetrization of the Ricci tensor $\rho^B$ of $g$, so it is an open problem to check if the induced invariant metric $\tilde g$ is still static.

\bigskip

\section{Static metrics in dimension 4}
The study of static metrics in dimension 4 turns out to be strictly related to K\"ahler-Einstein metrics. Indeed, combining Lemma 4.4 of \cite{aliv} and Theorem 2 of \cite{gaiv} we have that if $(M,J)$ is a compact complex surface and $g$ is static metric on $(M,J)$, then either $(M,J,g)$ is K\"ahler-Einstein or $(M,J)$ is the Hopf surface. Moreover, in \cite{gaiv,sttiPlur} it was proved that the Hopf surface admits a static metric with $\lambda=0$, and the Hopf surface cannot admit any symplectic form that tames the complex structure \cite[Proposition 2.24]{gua}. So
\begin{theorem}
Let $(M,J)$ be a compact complex surface and $g$ a static metric. Then one of the following cases occurs:
\begin{itemize}
\item $(M,J,g)$ is K\"ahler-Einstein 
\item $(M,J)$ is the Hopf surface and $\lambda=0$.
\end{itemize}
\end{theorem}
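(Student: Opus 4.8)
The plan is to reduce the statement to two inputs from the literature that together pin down the complex-geometric classification of the surface, and then to use the taming obstruction recorded in the Preliminaries to fix the value of $\lambda$ in the remaining case.

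First I would invoke the dichotomy already anticipated in the paragraph preceding the statement: combining Lemma 4.4 of \cite{aliv} with Theorem 2 of \cite{gaiv}, any static metric $g$ on a compact complex surface $(M,J)$ forces either $(M,J,g)$ to be K\"ahler-Einstein or $(M,J)$ to be (biholomorphic to) the Hopf surface. If we are in the first alternative we are done, as this is precisely the first bullet and there is nothing further to check. It therefore remains only to treat the case in which $(M,J)$ is the Hopf surface and to determine the admissible values of $\lambda$.

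So suppose $(M,J)$ is the Hopf surface; I would show that the static constant must vanish, arguing by contradiction. As recalled in the Preliminaries following \cite{sttiPlur}, a static metric with $\lambda\neq 0$ satisfies $-\frac{1}{\lambda}\rho^B(JX,X)>0$ together with $d\rho^B=0$, so that $-\frac{1}{\lambda}\rho^B$ is a symplectic form taming $J$. But by \cite[Proposition 2.24]{gua} the Hopf surface admits no symplectic form taming its complex structure. Hence $\lambda\neq 0$ is impossible and we must have $\lambda=0$, which is exactly the second bullet. That this case is genuinely realized, rather than vacuous, follows from \cite{gaiv,sttiPlur}, where an explicit static metric with $\lambda=0$ on the Hopf surface is exhibited; note also that the two bullets are disjoint, since the Hopf surface is not K\"ahler and so never K\"ahler-Einstein.

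The main obstacle is not in this assembly but in the two imported classification results, which carry the real analytic and geometric content: Lemma 4.4 of \cite{aliv} and Theorem 2 of \cite{gaiv} exploit the special geometry of compact complex surfaces, together with the behaviour of the Bismut--Ricci form on the non-K\"ahler case, to force the rigid alternative above. Granting those, the only genuinely self-contained step is the taming obstruction, which is short and relies solely on \cite[Proposition 2.24]{gua} combined with the elementary observation from \cite{sttiPlur} that a static metric with nonzero $\lambda$ produces a taming symplectic form.
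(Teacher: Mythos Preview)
Your proposal is correct and follows essentially the same approach as the paper: the theorem is stated as an immediate consequence of the paragraph preceding it, which invokes exactly the same three ingredients you use---the Alexandrov--Ivanov/Gauduchon--Ivanov dichotomy, the taming obstruction for the Hopf surface from \cite[Proposition 2.24]{gua}, and the existence result from \cite{gaiv,sttiPlur}. Your contradiction argument for $\lambda=0$ via the taming symplectic form is precisely the implicit step the paper leaves to the reader.
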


In this section we prove a similar result for left-invariant static metrics on 4-dimensional simply connected Lie groups (not necessarily compact) endowed with a left-invariant complex structure.

\begin{theorem}\label{dim4}
Let $(G,J,g)$ be a simply connected Lie group together with an invariant complex structure $J$ and an invariant static metric $g$. Then one of the following cases occurs:
\begin{itemize}
\item $(G,J,g)$ is K\"ahler-Einstein 
\item The Lie algebra $\g$ of $G$ is isomorphic to $\mathfrak{su}(2)\times\R$ and $\lambda=0$.
\end{itemize}
\end{theorem}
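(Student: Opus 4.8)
The plan is to push everything down to the Lie algebra $\g$ of $G$ and to run a case analysis over the (finite) classification of $4$-dimensional real Lie algebras carrying an integrable complex structure. Since $J$ and $g$ are left-invariant and $G$ is simply connected, the problem is purely algebraic: $\g$ is a $4$-dimensional Lie algebra, $J$ an integrable complex structure on $\g$, and $g$ a $J$-Hermitian inner product with fundamental form $\omega$ subject to $\de\debar\omega=0$ and $-\riccib=\lambda\omega$. First I would recall the classification of such pairs $(\g,J)$ (Ovando, Snow): up to isomorphism and automorphism this is a short list, and for each member $J$ can be normalized against a unitary coframe, so that the compatible inner products $g$ form an explicit finite-dimensional family, further reduced by the automorphism group.

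Next I would dispose of the easy cases. The abelian algebra $\R^4$ gives the flat torus, which is Kähler-Einstein and so falls in the first alternative. The only non-abelian nilpotent algebra admitting a complex structure is $\mathfrak{h}_3\times\R$; by the result of the previous section (a nilmanifold that is not a torus admits no invariant static metric with $\lambda=0$) together with the fact that such algebras carry no symplectic form taming $J$ (ruling out $\lambda\neq0$), this member contributes nothing. What remains are the solvable non-nilpotent algebras of the list and the two reductive algebras $\mathfrak{su}(2)\times\R$ and $\mathfrak{sl}(2,\R)\times\R$.

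For each remaining $(\g,J)$ the core computation is to impose the SKT equation $\de\debar\omega=0$ to single out the admissible metrics, then compute the Bismut connection from \eqref{eqbismut}, its curvature, and the Ricci form $\rho^B$, and finally extract $\riccib$ and solve $-\riccib=\lambda\omega$. The analysis splits according to $\lambda$. If $\lambda\neq0$, the static metric produces the invariant symplectic form $-\tfrac1\lambda\rho^B$ taming $J$; here I would verify that $\rho^B$ is in fact of type $(1,1)$, equivalently that the $(2,0)+(0,2)$-component of $dd^*\omega$ in \eqref{bischern} vanishes, so that Lemma \ref{lemma} forces $g$ to be Kähler-Einstein. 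If $\lambda=0$, then $\riccib=0$, and the structure equations should be incompatible with this vanishing except precisely on $\mathfrak{su}(2)\times\R$, where the natural (bi-invariant-type) metric underlying the Hopf surface $S^3\times S^1$ yields $\riccib=0$.

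The hard part is the bookkeeping: organizing the complex structures and the SKT-compatible metrics up to automorphism so that the Bismut--Ricci computation stays tractable, and carefully checking the positivity and sign conditions that separate the Kähler-Einstein solutions from the genuinely non-Kähler $\mathfrak{su}(2)\times\R$ case. One must in particular rule out the possibility that some solvable algebra secretly admits a $\lambda=0$ static metric; I expect the SKT constraint to be restrictive enough that, outside the abelian and reductive members, the condition $\riccib=0$ collapses the brackets and cannot hold, mirroring the compact-surface dichotomy between Kähler-Einstein metrics and the Hopf surface.
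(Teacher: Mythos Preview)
Your proposal is essentially the same strategy as the paper's proof: reduce to the Lie algebra, run through the finite classification of $4$-dimensional Lie algebras carrying an integrable complex structure, compute $\rho^B$ explicitly in each case, and solve $-\riccib=\lambda\omega$. The organizational differences are minor. The paper starts from the Madsen--Swann classification of \emph{SKT} structures on $4$-dimensional solvable Lie algebras (so the pluriclosed constraint is already built into the normal forms, which keeps the parameter count low), whereas you propose to start from the Snow/Ovando classification of complex structures and impose $\de\debar\omega=0$ afterwards; the paper's route is somewhat shorter in practice. You single out the nilpotent member $\mathfrak{h}_3\times\R$ and dispatch it via the nilmanifold theorem of the previous section, while the paper simply absorbs it into the solvable case analysis.

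One point to be careful about: your plan for $\lambda\neq0$ is to check that $\rho^B$ is automatically of type $(1,1)$ and then invoke Lemma~\ref{lemma}. In the paper this succeeds in two of the four solvable families (the ``complex'' case and ``real case I''), but in the other two solvable families the $(2,0)+(0,2)$-part of $\rho^B$ does not vanish a priori; there the paper instead either appeals to the nonexistence of an invariant taming symplectic form (which you also mention) or solves $-\riccib=\lambda\omega$ directly and finds that it already forces $d\omega=0$. So your outline is sound, but you should not expect the ``$\rho^B\in\Omega^{1,1}$ then Lemma~\ref{lemma}'' shortcut to work uniformly across all solvable cases.
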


Note that the Lie algebra $\mathfrak{su}(2)\times\R$ plays the same role as the Hopf manifold for compact complex surfaces. This happens because $\mathfrak{su}(2)\times\R$ is the Lie algebra of the Lie group $S^3\times S^1$, that is diffeomorphic to the Hopf manifold.

\begin{proof}[Proof of Theorem \ref{dim4}]
Since we are interested in invariant structures, it is sufficient to study the induced structures on the corresponding Lie algebra.
Let $\g$ be a Lie algebra: the \emph{derived series} of $\g$ is defined by $\mathcal D^1\g=[\g,\g],\ \mathcal D^k\g=[\mathcal D^{k-1}\g,\mathcal D^{k-1}\g]$, and we say that $\g$ is \emph{solvable} if there exists an integer $s$ such that $\mathcal D^s\g=0$. According to \cite{bb}, a Lie algebra of dimension 4 is either solvable, isomorphic to $\mathfrak{su}(2)\times\R$ or isomorphic to $\mathfrak{sl} (2,\R) \times\R$. We started by considering solvable Lie algebras.
\vspace{3mm}

A classification of 4-dimensional solvable Lie algebras admitting a left-invariant complex structure can be found in \cite{snow}, and recently Madsen and Swann in \cite{masw} gave a classification of SKT structures on solvable Lie algebras of dimension four.
With the help of a \textsc{Maple} software, we use this classification to compute directly the Ricci tensor of the Bismut connection.
According to \cite{masw}, we can suppose that $J$ is defined by $Je^1=e^2,\ Je^3=e^4$, where $\{ e^1,e^2,e^3,e^4 \}$ is a basis of $\g^*$. Moreover, $\g$ belongs to one of the following cases:
\vspace{2mm}

 \textbf{Complex case}: $\g$ has structure equations
\[
\begin{cases} 
d \mathbf{e}^1 = 0 \\
d \mathbf{e}^2 = a_1 \mathbf{e}^{12} \\
d \mathbf{e}^3 = b_1 \mathbf{e}^{12}+b_2 \mathbf{e}^{13}+b_3 \mathbf{e}^{14}-c_1 \mathbf{e}^{23}+c_2 \mathbf{e}^{24} \\
d \mathbf{e}^4 = d_1 \mathbf{e}^{12}+d_2 \mathbf{e}^{13}+d_3 \mathbf{e}^{14}-f_1 \mathbf{e}^{23}+f_2 \mathbf{e}^{24}+h_1 \mathbf{e}^{34},
\end{cases}
\]
where by $e^{ij}$ we denote the wedge product $e^i\wedge e^j$, and the real coefficients $a_1,b_i,c_i,d_i,f_i,h_1$ satisfy
\begin{equation}\label{eqcomplex}
\begin{array}{lll}
f_{{1}}=c_{{2}}+d_{{3}}-b_{{2}} && f_{{2}}=-c_{{1}}+d_{{2}}+b_{{3}} \\
a_{{1}}c_{{1}}-b_{{3}}f_{{1}}-c_{{2}}d_{{2}}=0 && c_{{2}}a_{{1}}-c_{{2}}b_{{2}}+c_{{2}}d_{{3}}-b_{{3}}c_{{1}}-b_{{3}}f_{{2}}=0 \\
h_{{1}} \left( {b_{{2}}}^{2}+{b_{{3}}}^{2}+{c_{{1}}}^{2}+{c_{{2}}}^{2} \right)=0 && f_{{1}}a_{{1}}+f_{{1}}b_{{2}}-f_{{1}}d_{{3}}-d_{{2}}c_{{1}}-f_{{2}}d_{{2}}+h_{{1}}d_{{1}}=0 \\
a_{{1}}f_{{2}}+b_{{1}}h_{{1}}-b_{{3}}f_{{1}}-c_{{2}}d_{{2}}=0 &&
 \left( a_{{1}}+b_{{2}}+d_{{3}} \right)  \left( b_{{2}}+d_{{3}}
 \right) + \left( c_{{1}}-f_{{2}} \right) ^{2}-h_{{1}}d_{{1}}=0.
\end{array}
\end{equation}
In the sequel, to shorten the notation, we will denote the structure equations of $\g$ as
\[
\g = (0,a_1 \mathbf{e}^{12},b_1 \mathbf{e}^{12}+b_2 \mathbf{e}^{13}+b_3 \mathbf{e}^{14}-c_1 \mathbf{e}^{23}+c_2 \mathbf{e}^{24},d_1 \mathbf{e}^{12}+d_2 \mathbf{e}^{13}+d_3 \mathbf{e}^{14}-f_1 \mathbf{e}^{23}+f_2 \mathbf{e}^{24}+h_1 \mathbf{e}^{34}).
\]
The fundamental 2-form of the SKT metric is $\omega =  \mathbf{e}^{12}+ \mathbf{e}^{34}$, and we obtain
\[
\begin{split}
\rho^B(X,Y) =&\, ({a_{{1}}}^{2}+{b_{{1}}}^{2}+{d_{{1}}}^{2}+a_{{1}}c_{{2}}-a_{{1}}b_{{2}}+h_{{1}}d_{{1}})\cdot\mathbf{e}^{12}  +  (b_{{1}}b_{{2}}+d_{{1}}d_{{2}}+h_{{1}}d_{{2}}) \cdot\mathbf{e}^{13}  + \\ &\,(b_{{1}}b_{{3}}+d_{{1}}d_{{3}}+h_{{1}}d_{{3}})\cdot\mathbf{e}^{14}  +  (d_{{1}}b_{{2}}-b_{{1}}c_{{1}}-d_{{1}}c_{{2}}-d_{{1}}d_{{3}}-h_{{1}}c_{{2}}-h_{{1}}d_{{3}}+h_{{1}}b_{{2}})\cdot\mathbf{e}^{23}+ \\
&\,(d_{{1}}b_{{3}}+b_{{1}}c_{{2}}-d_{{1}}c_{{1}}+d_{{1}}d_{{2}}-h_{{1}}c_{{1}}+h_{{1}}d_{{2}}+h_{{1}}b_{{3}})\cdot \mathbf{e}^{24}  +  (h_{{1}}d_{{1}}+{h_{{1}}}^{2})\cdot  \mathbf{e}^{34}. 
\end{split}
\]
If we impose that $-\riccib=\lambda\omega$ with $\lambda\neq0$, we find that $h_1\neq0$, so by \eqref{eqcomplex} $b_2=b_3=c_1=c_2=0$, and the $(2,0)+(0,2)$-part of $\rho^B$ vanishes. Then by Lemma \ref{lemma} any static metric is K\"ahler-Einstein. \\
On the other hand, imposing $\riccib=0$ we find that $d\omega=0$, so $g$ must be K\"ahler-Einstein.
\vspace{2mm}

 \textbf{Real case I}: $\g$ has structure equations
\[
(0,a_1 \mathbf{e}^{12}+a_3( \mathbf{e}^{14}- \mathbf{e}^{23})+b_2 \mathbf{e}^{34},0,d_1 \mathbf{e}^{12}+d_3( \mathbf{e}^{14}- \mathbf{e}^{23})+h_1 \mathbf{e}^{34})
\]
where $d \mathbf{e}^2$ and $d \mathbf{e}^4$ are linearly indipendent and the real coefficients satisfy
\begin{equation}\label{eqrealI}
\begin{array}{lll}
b_{{2}}a_{{1}}-b_{{2}}d_{{3}}+f_{{2}}a_{{3}}-{a_{{3}}}^{2}=0 &&
d_{{1}}f_{{2}}-d_{{1}}a_{{3}}+d_{{3}}a_{{1}}-{d_{{3}}}^{2}=0 \\
d_{{3}}a_{{3}}-b_{{2}}d_{{1}}=0 && 
(d_1-a_3)(f_2+a_3)-(d_3+a_1)(d_3-b_2)=0.
\end{array}
\end{equation}
The fundamental 2-form of the SKT metric is $\omega =  \mathbf{e}^{12}+ \mathbf{e}^{34}+t \mathbf{e}^{14}+t \mathbf{e}^{23}$ with $t\in (-1,1)$. Computing $\rho^B$ we find
\[
\begin{split}
\rho^B(X,Y) =&\, -{\frac {b_{{2}}a_{{1}}+f_{{2}}d_{{1}}+{a_{{1}}}^{2}+{d_{{1}}}^{2}}{{t}^{2}-1}}\cdot\mathbf{e}^{12} -{\frac {{b_{{2}}}^{2}+{f_{{2}}}^{2}+b_{{2}}a_{{1}}+f_{{2}}d_{{1}}}{{t}^{2}-1}}\cdot  \mathbf{e}^{34}    \\
&\,-{\frac {b_{{2}}a_{{3}}+d_{{3}}f_{{2}}+a_{{3}}a_{{1}}+d_{{3}}d_{{1}}}{{t}^{2}-1}}\cdot\mathbf{e}^{14}  +  {\frac {b_{{2}}a_{{3}}+d_{{3}}f_{{2}}+a_{{3}}a_{{1}}+d_{{3}}d_{{1}}}{{t}^{2}-1}}\cdot\mathbf{e}^{23}. 
\end{split}
\]
Clearly $(\rho^B)^{(2,0)+(0,2)}=0$, so applying Lemma \ref{lemma} any static metric with $\lambda\neq0$ is K\"ahler-Einstein. Moreover, if we impose $\riccib=0$ we obtain that $\g$ must be abelian.
\vspace{2mm}

 \textbf{Real case II}: $\g$ has structure equations
\[\left\{
\begin{aligned}
&d \mathbf{e}^1 = 0 \\
&d \mathbf{e}^2 = -kq^2\, \mathbf{e}^{12}-kqr( \mathbf{e}^{14}- \mathbf{e}^{23})-kr^2\, \mathbf{e}^{34} \\
&d \mathbf{e}^3 = \frac{c_3q}{r}\, \mathbf{e}^{12}+c_3\, \mathbf{e}^{14} \\
&d \mathbf{e}^4 = \frac{kq^3}{r}\, \mathbf{e}^{12}-c_3\, \mathbf{e}^{13}+kq^2( \mathbf{e}^{14}- \mathbf{e}^{23})+kqr\, \mathbf{e}^{34},
\end{aligned}
\right.\]
with $q,r,k\in\R$ such that $q^2+r^2=1,\ r>0$ and $k\neq0$. The fundamental 2-form of the SKT metric is $\omega =  \mathbf{e}^{12}+ \mathbf{e}^{34}+t \mathbf{e}^{14}+t \mathbf{e}^{23}$, with $t\in (-1,1)$, and it is never K\"ahler. Computing $\rho^B$ we find
\[
\begin{split}
\rho^B(X,Y) =&\, 
-{\frac { \left( c_{{3}}k{q}^{2}t+{k}^{2}q\sqrt {1-{q}^{2}}+\sqrt {1-{q}^{2}}q{c_{{3}}}^{2}-ktc_{{3}} \right) q}{ \left( 1-{q}^{2} \right) ^{3/2} \left( -1+{t}^{2} \right) }} \cdot\mathbf{e}^{12}
+ {\frac {c_{{3}} \left( kq+\sqrt {1-{q}^{2}}c_{{3}}t \right) }{ \left(-1+{t}^{2} \right) \sqrt {1-{q}^{2}}}} \cdot\mathbf{e}^{13}\\ 
&\, + {\frac {t\sqrt {1-{q}^{2}}kc_{{3}}-{k}^{2}q-q{c_{{3}}}^{2}}{ \left( -1+{t}^{2} \right) \sqrt {1-{q}^{2}}}} \cdot\mathbf{e}^{14}
- {\frac {{k}^{2}q \left( -2\,{q}^{2}+2\,{t}^{2}{q}^{2}+1 \right) }{\sqrt {1-{q}^{2}} \left( -1+{t}^{2} \right) }}\cdot\mathbf{e}^{23}
- {\frac {{k}^{2}}{-1+{t}^{2}}} \cdot\mathbf{e}^{34}.
\end{split}
\]
By \cite{ef}, this Lie algebra does not admit any symplectic form that tames $J$, so we cannot have a static metric on $\g$ with $\lambda\neq 0$. On the other hand, if $\riccib=0$ we must have $k=0$, that is a contradiction. Therefore $\g$ does not admit any static metric.
\vspace{2mm}

 \textbf{Real case III}: $\g$ has structure equations
\[
\left\{
\begin{aligned}
&d \mathbf{e}^1 = 0 \\
&d \mathbf{e}^2 = -k(1+q^2)\, \mathbf{e}^{12}-kqr( \mathbf{e}^{14}- \mathbf{e}^{23})-kr^2\, \mathbf{e}^{34} \\
&d \mathbf{e}^3 = \frac{c_3q}{r}\, \mathbf{e}^{12}-\frac{k}{2}\, \mathbf{e}^{13}+c_3\, \mathbf{e}^{14} \\
&d \mathbf{e}^4 = \frac{q}{r}(kq^2+\frac{k}{2})\, \mathbf{e}^{12}-c_3\, \mathbf{e}^{13}+(kq^2-\frac{k}{2}) \mathbf{e}^{14}-kq^2 \mathbf{e}^{23}+kqr\, \mathbf{e}^{34},
\end{aligned}
\right.
\]
with $q,r,k\in\R$ such that $q^2+r^2=1,\ r>0$ and $k\neq0$; if $c_3=0$ we have $\g\cong \mathfrak{d}_{4,\frac{1}{2}} $, otherwise $\g\cong \mathfrak{d}'_{ 4,\vert\frac{ k}{2c_3}\vert }$. The fundamental 2-form of the SKT metric is $\omega =  \mathbf{e}^{12}+ \mathbf{e}^{34}+t \mathbf{e}^{14}+t \mathbf{e}^{23}$ with $t\in (-1,1)$, and is K\"ahler if and only if $q=0$. Computing $\rho^B$ we find
\[
\begin{split}
\rho^B(X,Y) =&\, 
-\frac{1}{4}\,{\frac {-8\,{k}^{2}{q}^{6}+4\,{k}^{2}{q}^{4}+8\,c_{{3}}\sqrt {1-{q}^{2}}k{q}^{3}t+{k}^{2}{q}^{2}-4\,{c_{{3}}}^{2}{q}^{2}+12\,c_{{3}}kq\sqrt {1-{q}^{2}}t-6\,{k}^{2}}{ \left( -1+{q}^{2} \right)  \left( {t}^{2}-1 \right) }} \cdot\mathbf{e}^{12}\\
&\,-\frac{1}{4}\,{\frac {8\,c_{{3}}k{q}^{3}-4\,kqc_{{3}}-4\,\sqrt {1-{q}^{2}}{c_{{3}}}^{2}t+3\,t{k}^{2}\sqrt {1-{q}^{2}}}{ \left( {t}^{2}-1 \right) \sqrt {1-{q}^{2}}}} \cdot\mathbf{e}^{13}\\ 
&\, -\frac{1}{4\left( {t}^{2}-1 \right) \sqrt {1-{q}^{2}}}\,\Big(8\,{t}^{3}k{q}^{2}c_{{3}}\sqrt {1-{q}^{2}}-8\,k{q}^{2}\sqrt {1-{q}^{2}}c_{{3}}t+16\,k\sqrt {1-{q}^{2}} {t}^{3}c_{{3}}\\
&-8\,c_{{3}}kt\sqrt {1-{q}^{2}}+16\,{t}^{2}{k}^{2}{q}^{5}-8\,{k}^{2}{q}^{5}-10 \,{t}^{2}{k}^{2}q+5\,{k}^{2}q -8\,{t}^{2}q{c_{{3}}}^{2}+4\, q{c_{{3}}}^{2}\Big) \cdot\mathbf{e}^{14}\\
&\,+\frac{1}{2}\,{\frac {{k}^{2}q \left( 8\,{t}^{2}{q}^{4}-4\,{q}^{4}+2\,{t}^{2}{q}^{2}-4\,{t}^{2}+1 \right) }{\sqrt {1-{q}^{2}} \left( {t}^{2}-1 \right) }} \cdot\mathbf{e}^{23}
+\frac{1}{2}\,{\frac {{k}^{2} \left( -3+2\,{q}^{2}+4\,{q}^{4} \right) }{{t}^{2}-1}} \cdot\mathbf{e}^{34}.
\end{split}
\]
Imposing that $-\riccib=\lambda\omega$, we find that $q=t=0$, so $d\omega=0$. Thus $g$ is static if and only if it is K\"ahler-Einstein.

This concludes the proof in the solvable case.
\vspace{3mm}

The non-solvable 4-dimensional Lie algebras $\mathfrak{su}(2)\times\R$ and $\mathfrak{sl} (2,\R) \times\R$ have structure equations
\[
\begin{split}
&\mathfrak{su}(2)\times\R\ = (- \mathbf{e}^{23},\mathbf{e}^{13},- \mathbf{e}^{12},0) \\
&\mathfrak{sl} (2,\R) \times\R\  = (- \mathbf{e}^{23} ,\mathbf{e}^{13},\mathbf{e}^{12},0).
\end{split}
\]
From a more general result in \cite{ras} we have that the only complex structures on these algebras are defined in both cases by
\[
J \mathbf{e}^1 =  \mathbf{e}^2, \qquad J \mathbf{e}^3 = -p\cdot  \mathbf{e}^3+(1+p^2)\cdot  \mathbf{e}^4.
\]
All the metrics compatible with those complex structures have the form 
\begin{equation}\label{metrCond}
\omega = m_{11}\mathbf{e}^{12}+m_{32}\mathbf{e}^{13}+\frac{m_{31}+pm_{32}}{1+p^2}\,\mathbf{e}^{14}-m_{31}\mathbf{e}^{23}+\frac{m_{32}-pm_{31}}{1+p^2}\,\mathbf{e}^{24}+m_{44}\mathbf{e}^{34}.
\end{equation}
Both these Lie algebras are semisimple, thus by Theorem 8 of \cite{chu} they cannot admit any invariant symplectic structure. So no invariant static metric with $\lambda\neq 0$ can be found on these algebras. Moreover, since they are unimodular, every 3-form is closed, then every $J$-compatible metric is SKT.

We study the two cases separately:
\begin{itemize}
\item $\mathfrak{su}(2)\times\R$.
Let $g$ be a Hermitian metric whose fundamental 2-form satisfy \eqref{metrCond}. Then
\[
\begin{split}
\riccib(X,Y) =&\, {\frac {-2\,{{ m_{31}}}^{2}-2\,{{m_{32}}}^{2}+(1+p^2)\,{ m_{44}}\,{ m_{11}}-{{ m_{44}}}^{2}(1+p^2)^2}{-{{ m_{31}}}^{2}-{{ m_{32}}}^{2}+{ m_{44}}\,{
 m_{11}}\,{p}^{2}+{ m_{44}}\,{ m_{11}}}} \cdot  \mathbf{e}^{12} \\
           &\, +\frac{1}{2}\,{\frac {(1+p^2)(m_{44}m_{32}+m_{11}m_{32}+p\cdot m_{44}m_{31} )}{-{{  m_{31}}}^{2}-{{  m_{32}}}^{2}+{  m_{44}}\,{
  m_{11}}\,{p}^{2}+{  m_{44}}\,{  m_{11}}}} \cdot ( \mathbf{e}^{13}+ \mathbf{e}^{24}) \\
           &\, +\frac{1}{2}\,{\frac {{  m_{44}}\,{p}^{2}{  m_{31}}+{  m_{44}}\,{  m_{31}}+p\,{  m_{11}
}{  m_{32}}+{  m_{31}}\,{  m_{11}}}{-{{  m_{31}}}^{2}-{{  m_{32}}}^{2}+{
  m_{44}}\,{  m_{11}}\,{p}^{2}+{  m_{44}}\,{  m_{11}}}} \cdot ( \mathbf{e}^{14}- \mathbf{e}^{23}). 
\end{split}
\]
As said before, this Lie algebra can only admit static metric with $\lambda=0$; imposing the vanishing of $\riccib$ we obtain that $m_{11}=(1+p^2)m_{44}$  and $m_{31}=m_{32}=0$, 
so every metric in the form
\begin{equation}\label{su(2)xR}
\omega= m_{44}(1+p^2)\mathbf{e}^{12}+m_{44}\mathbf{e}^{34}
\end{equation}
is static with $\lambda=0$. Therefore \emph{every complex structure on $\mathfrak{su}(2)\times\R$ admits a compatible static metric with $\lambda=0$}. More in general, for these metrics we have $\rho^B=0$.

\item $\mathfrak{sl} (2,\R) \times\R$.
Let $g$ be a Hermitian metric whose fundamental 2-form satisfy \eqref{metrCond}. Then
\[
\begin{split}
\riccib(X,Y) =&\, -{\frac {-2\,{{ m_{31}}}^{2}-2\,{{m_{32}}}^{2}+(1+p^2)\,{ m_{44}}\,{ m_{11}}+{{ m_{44}}}^{2}(1+p^2)^2}{-{{  m_{31}}}^{2}-{{  m_{32}}}^{2}+{  m_{44}}\,{
  m_{11}}\,{p}^{2}+{  m_{44}}\,{  m_{11}}}} \cdot  \mathbf{e}^{12} \\
           &\, +\frac{1}{2}\,{\frac { (1+p^2)(-m_{44}m_{32}+m_{11}m_{32}-p\cdot m_{44}m_{31} ) }{-{{  m_{31}}}^{2}-{{  m_{32}}}^{2}+{  m_{44}}
\,{  m_{11}}\,{p}^{2}+{  m_{44}}\,{  m_{11}}}} \cdot ( \mathbf{e}^{13}+ \mathbf{e}^{24}) \\
           &\, +\frac{1}{2}\,{\frac {-{  m_{31}}\,{  m_{44}}\,{p}^{2}-{  m_{44}}\,{  m_{31}}+p\,{  
m_{11}}{  m_{32}}+{  m_{31}}\,{  m_{11}}}{-{{  m_{31}}}^{2}-{{  m_{32}}}^{2
}+{  m_{44}}\,{  m_{11}}\,{p}^{2}+{  m_{44}}\,{  m_{11}}}} \cdot ( \mathbf{e}^{14}- \mathbf{e}^{23}). 
\end{split}
\]
Again, this algebra can only admit static metric with $\lambda=0$, and imposing that $\riccib=0$ we obtain that $m_{11}=-(1+p^2)m_{44}$ and $m_{31}=m_{32}=0$;
but $m_{11} m_{44}\leqslant 0$, that is a contradiction because $g$ is positive definite. Then $\mathfrak{sl} (2,\R) \times\R$ does not admit any static metric.
\end{itemize}
\end{proof}

\bigskip

\section{Tangent Lie algebras and compact semisimple Lie groups}\

Consider a $2n$-dimensional Lie algebra $\g$ and a flat connection $D$ of $\g$. We define the tangent Lie algebra $(T_D\,\g=\g\ltimes_D \R^{2n},[\,,]_D)$ with the Lie bracket 
\[
[(X_1,X_2),(Y_1,Y_2)]_D = ([X_1,Y_1],D_{X_1}Y_2-D_{Y_1}X_2).
\]
Additionally, if $(J,g)$ is a Hermitian structure on $\g$ and if $D$ is Hermitian, i.e. $Dg=DJ=0$, then on $T_D\,\g$ we can define a complex structure $\tilde J(X_1,X_2)=(JX_1,JX_2)$ and a $\tilde J$-Hermitian metric $\tilde g$ such that $(\g,0)$ and $(0,\g)$ are orthogonal (see \cite{bafi}). 
\begin{theorem}\label{tangent}
Let $(\g,J,g)$ be a Hermitian Lie algebra and $D$ a Hermitian flat connection. Then $g$ is a $J$-Hermitian static metric with $\lambda=0$ if and only if $\tilde g$ is a $\tilde J$-Hermitian static metric with $\lambda=0$.
\end{theorem}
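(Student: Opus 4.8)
The plan is to reduce both the SKT property and the vanishing of $\riccib$ to the corresponding data on $\g$, by computing the Bismut connection of $\tilde g$ in block form with respect to the splitting $T_D\,\g=V\oplus W$, where $V=(\g,0)$ and $W=(0,\g)$. Writing $X^V=(X,0)$ and $X^W=(0,X)$, the bracket $[\,,]_D$ gives $[X^V,Y^V]=[X,Y]^V$, $[X^V,Y^W]=(D_XY)^W$, $[X^W,Y^V]=-(D_YX)^W$ and $[X^W,Y^W]=0$, while $\tilde J$ and $\tilde g$ respect the splitting, acting as $J$ and $g$ on each factor. Throughout I will use that $D$ Hermitian means each $D_X$ commutes with $J$ and is $g$-skew-symmetric, and that $D$ flat means $D_{[X,Y]}=[D_X,D_Y]$.

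First I would substitute these brackets into formula \eqref{eqbismut} and run through the four cases. Using $D_XJ=JD_X$, the $g$-skew-symmetry of $D_X$, and the integrability of $J$, the cross terms collapse and one obtains the clean block form
\[
\tilde\nabla^B_{(X,X')}(Z,Z')=\big(\bismut_XZ,\,D_XZ'\big),
\]
that is, $\bismut$ on the $V$-part and $D$ on the $W$-part, with in particular $\tilde\nabla^B_{X^W}(\,\cdot\,)=0$. This is the computational heart of the argument, and the step I expect to be the main obstacle, since each case requires several cancellations that rely precisely on the Hermitian properties of $D$.

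From this block form the rest follows almost formally. The torsion $\tilde T^B$ vanishes on every pair involving $W$ and restricts to $T^B$ on $V$, so the torsion $3$-form satisfies $\tilde c=p^*c$, where $p\colon T_D\,\g\to\g$, $(X_1,X_2)\mapsto X_1$, is the projection. Since $p$ is a surjective Lie algebra homomorphism, $p^*$ commutes with the Chevalley--Eilenberg differential and is injective, whence $d\tilde c=p^*(dc)$ and $\tilde g$ is SKT if and only if $g$ is SKT. For the curvature, the block form together with flatness of $D$ gives
\[
\tilde R^B(\mathcal X,\mathcal Y)(Z,Z')=\big(R^B(X,Y)Z,\,-R^D(X,Y)Z'\big)=\big(R^B(X,Y)Z,\,0\big),
\]
where $X,Y$ are the $V$-parts of $\mathcal X,\mathcal Y$; the $W$-component dies exactly because $R^D=0$.

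Finally I would evaluate $\tilde\rho^B$ on the orthonormal basis $\{e_i^V\}\cup\{e_i^W\}$ of $T_D\,\g$. The $W$-vectors are annihilated by $\tilde R^B$, so they contribute nothing, and the $V$-vectors reproduce the Ricci form of $\g$, yielding $\tilde\rho^B=p^*\rho^B$. Hence $\tilde\rho^B(\mathcal X,\tilde J\mathcal X)=\rho^B(X,JX)$ with $X=p(\mathcal X)$ ranging over all of $\g$; recalling (as in the nilmanifold argument) that for any $2$-form $\beta$ one has $\beta(\,\cdot\,,\tilde J\,\cdot\,)=\beta^{(1,1)}(\,\cdot\,,\tilde J\,\cdot\,)$ and that a $(1,1)$-form is determined by these values, we conclude $(\tilde\rho^B)^{(1,1)}=0$ if and only if $\riccib=0$. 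Combining this with the SKT equivalence shows that $\tilde g$ is static with $\lambda=0$ if and only if $g$ is, which is the claim.
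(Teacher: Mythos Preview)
Your proof is correct and follows essentially the same line as the paper's: both establish the block form $\tilde\nabla^B_{(X,X')}(Z,Z')=(\nabla^B_XZ,\,D_XZ')$, read off $\tilde R^B=(R^B,R^D)=(R^B,0)$ from flatness of $D$, and conclude that $\tilde\rho^B$ agrees with $\rho^B$ (your $p^*\rho^B$). The only differences are cosmetic---the paper cites an external reference for the SKT equivalence whereas you prove it directly via $\tilde c=p^*c$, and under the paper's curvature convention the $W$-component of $\tilde R^B$ is $+R^D$ rather than $-R^D$, which is immaterial since $R^D=0$.
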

\begin{proof}
By \cite[Proposition 3.1]{ef}, $(T_D\,\g,\tilde J,\tilde g)$ is SKT if and only if $(\g,J,g)$ is SKT, so we only have to prove that 
\[
\riccib = 0\ \Longleftrightarrow\ (\tilde\rho^B)^{(1,1)} = 0,
\] 
where $\tilde\rho^B$ is the Ricci tensor of the Bismut connection $\tilde \nabla^B$ of $(T_D\,\g,\tilde J,\tilde g)$. The Bismut connections of $(T_D\,\g,\tilde J,\tilde g)$ and $(\g,J,g)$ are related by
\begin{equation} \label{bismuttang}
\tilde g  (\tilde \nabla^B_{(X_1, X_2)} (Y_1, Y_2), (Z_1, Z_2)) =  g(\nabla^B_{X_1} Y_1, Z_1) + g (D_{X_1} Y_2, Z_2),
\end{equation}
so the curvature $\tilde R^B$ of $\tilde\nabla^B$ is given by $\tilde R^B=(R^B,R^D)=(R^B,0)$ since $D$ is flat. Hence, the Ricci tensors of $\tilde\nabla^B$ and $\nabla^B$ are equal, as well as their $(1,1)$ parts.

\end{proof}

Let $(M,J,g)$ be a complex manifold with an SKT metric such that the Bismut connection $\bismut$ has trivial holonomy. Then clearly $-\riccib = 0$, i.e. $g$ is a static metric with $\lambda=0$. It is well known that this condition holds if $M$ is a Lie group and $g$ a bi-invariant metric, that is a metric which is both left-invariant and right-invariant.
Let $\tilde g$ be the induced bi-invariant metric on the Lie algebra $\g$, then it satisfies 
\begin{equation}\label{eqbiinv}
\tilde g([X,Y],Z)=-\tilde g(Y,[X,Z]).
\end{equation}
By using this equation and the integrability of the complex structure in \eqref{eqbismut} and \eqref{eqtorsion} we obtain that $\nabla^B_XY=0$ for every $X,Y\in\g$, so $Hol(\nabla^B)=0$, and $c(X,Y,Z)=-\frac{1}{2}g([X,Y],Z)$. Then, applying \eqref{eqbiinv} and the Jacobi identity we obtain $dc=0$, so $\tilde g$ is SKT.

Since the work of Samelson and Wang \cite{sam,wang}, it has been known that every compact even-dimensional Lie group $G$ admits a left-invariant complex structure $J_L$ and a right-invariant one $J_R$. Moreover, if $G$ is semisimple, the bi-invariant metric $g_K$ induced by the Killing form is compatible with both $J_L,J_R$.
\begin{prop}\label{cpt}
Let $G$ be a compact, even-dimensional semisimple Lie group. Then it admits a static metric with $\lambda=0$.
\end{prop}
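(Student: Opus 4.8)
The plan is to produce an explicit static metric with $\lambda=0$, namely the bi-invariant metric coming from the Killing form, and to verify it meets both requirements in the definition of static by reusing the computation recorded immediately before the statement. First I would invoke the theorem of Samelson and Wang quoted above: since $G$ is compact and even-dimensional, it admits a left-invariant complex structure $J_L$. Because $G$ is compact and semisimple, its Killing form $B$ is negative definite, so $g_K:=-B$ is a positive-definite, bi-invariant Hermitian metric, which by the remark recalled above is compatible with $J_L$.

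The core of the argument is then a direct appeal to the bi-invariant discussion. Being bi-invariant, $g_K$ satisfies the ad-invariance identity $g_K([X,Y],Z)=-g_K(Y,[X,Z])$. Substituting this together with the integrability of $J_L$ into the bracket formula for the Bismut connection yields $\bismut_XY=0$ for all $X,Y\in\g$, so $\bismut$ is flat with trivial holonomy; in particular $R^B\equiv 0$, whence $\rho^B=0$ and a fortiori $\riccib=0$, i.e. $-\riccib=\lambda\omega$ with $\lambda=0$. The same identity reduces the torsion to $c(X,Y,Z)=-\tfrac12\,g_K([X,Y],Z)$, which is closed by ad-invariance and the Jacobi identity, so $g_K$ is SKT. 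Combining the two facts shows $g_K$ is a static metric with $\lambda=0$, completing the proof.

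Since every ingredient is already in place, I do not anticipate a genuine obstacle: the argument is essentially the assembly of the Samelson--Wang existence statement, the semisimple Killing-form compatibility, and the trivial-holonomy computation. The only points meriting care are conventions, namely that on a compact semisimple algebra one must pass from $B$ to $-B$ to obtain a positive-definite metric, and that the compatibility of $g_K$ with $J_L$ survives this positive rescaling (it does). It is also worth remarking that the resulting metric in fact has $\rho^B=0$, and not merely $\riccib=0$, which parallels the $\mathfrak{su}(2)\times\R$ example of the previous section and explains why these metrics sit at the $\lambda=0$ end of the static family.
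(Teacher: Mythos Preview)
Your proposal is correct and follows exactly the approach the paper itself takes: the proposition has no separate proof environment precisely because its justification is the paragraph immediately preceding it, namely Samelson--Wang for the existence of $J_L$, compatibility of the Killing-form metric $g_K$, and the bi-invariant computation showing $\bismut\equiv 0$ (hence $\rho^B=0$) and $dc=0$. You have simply reassembled that discussion into a self-contained argument, so there is nothing to add.
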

\begin{remark}
The Lie algebra $\mathfrak{su}(2)\times\R$ considered in the former section is contained in this class, and we can obtain a bi-invariant metric by setting $m_{44}=1$ and $p=0$ in \eqref{su(2)xR}.
\end{remark}

In view of this proposition, if we find a flat Hermitian connection $D$ on a $2n$-dimensional compact semisimple Lie group $G$ whose Lie algebra is $\g$, then we can construct a static metric with $\lambda=0$ on the tangent Lie algebra $T_D\,\g$.

\begin{example}
For every $2n$-dimensional compact semisimple Lie group $G\cong G_0\times S^1$, where $G_0$ is a $(2n-1)$-dimensional compact semisimple Lie group, we can construct a flat Hermitian connection $D$ (\cite[Proposition 3.4]{ef}).

If we consider the Lie algebra $\mathfrak{su}(2)\times\R$ together with the complex structure $J\mathbf{e}^1=\mathbf{e}^2,\ J\mathbf{e}^3=\mathbf{e}^4$ and the $J$-Hermitian bi-invariant metric $g=\sum_i \mathbf{e}^i\otimes \mathbf{e}^i$, we can define the Hermitian connection
\[
D_{\mathbf{e}_i}Y=0\ i=1,2,3 \qquad \ D_{\mathbf{e}_4}Y=JY
\]
for any $Y\in\g$. The corresponding tangent Lie algebra has structure equation
\[
T_D\,\g = \big( -\mathbf{f}^{23},\mathbf{f}^{13},-\mathbf{f}^{12},0,-\mathbf{f}^{46},\mathbf{f}^{45},-\mathbf{f}^{48},\mathbf{f}^{47} \big),
\]
where $\mathbf{f}^i=(\mathbf{e}^i,0)$ for $i=1,\dots,4$ and $\mathbf{f}^j=(0,\mathbf{e}^j)$ for $j=5,\dots,8$, and the induced Hermitian structure $(\tilde J,\tilde g)$ is given by
\[
\tilde J\mathbf{f}^{2i-1}=\mathbf{f}^{2i},\ i=1..4 \qquad \tilde g=\sum_{i=1}^8 \mathbf{f}^i\otimes \mathbf{f}^i.
\]
\end{example}

Note that for any other compact semisimple Lie group this result cannot be applied since for every non-abelian simple Lie algebra $[\g,\g]=\g$. However, this is not the only way to construct flat Hermitian connections, as shown in the next example.

\begin{example}
Let us consider $G=S^3\times S^3$. The associated Lie algebra $\g$ is 
\[
\mathfrak{su}(2)\times \mathfrak{su}(2) = \big( -\mathbf{e}^{23},\mathbf{e}^{13},-\mathbf{e}^{12},-\mathbf{e}^{56},\mathbf{e}^{46},-\mathbf{e}^{45} \big).
\]
Clearly $[\g,\g]=\g$, so we cannot apply Proposition 3.4 of \cite{ef}. However, the linear connection $D$ defined by
\[
\begin{split}
 &D_{\mathbf{e}_1}=\left[
 \begin {array}{cccccc} 
 0&0&0&\frac12&0&\frac{1}{\sqrt{2}}\\
 0&0&-\frac12&0&-\frac{1}{\sqrt{2}}&0\\
 0&\frac12&0&0&-\frac12&0\\
 -\frac12&0&0&0&0&-\frac12\\
 0&\frac{1}{\sqrt{2}}&\frac12&0&0&0\\
 -\frac{1}{\sqrt{2}}&0&0&\frac12&0&0
 \end {array}
 \right] \qquad\quad\,
D_{\mathbf{e}_2} = \left[ 
 \begin {array}{cccccc} 
 0&-\frac12&0&0&-\frac12&0\\ 
 \frac12&0&0&0&0&-\frac12\\
 0&0&0&1&0&0\\
 0&0&-1&0&0&0\\ 
 \frac12&0&0&0&0&-\frac12\\
 0&\frac12&0&0&\frac12&0
 \end {array} 
 \right] \\
&D_{\mathbf{e}_3} = \left[
 \begin {array}{cccccc} 
 0&-\frac{1}{\sqrt{2}}&\frac12&0&0&0\\
 \frac{1}{\sqrt{2}}&0&0&\frac12&0&0\\
 -\frac12&0&0&0&0&-\frac12\\
 0&-\frac12&0&0&\frac12&0\\
 0&0&0&-\frac12&0&\frac{1}{\sqrt{2}}\\
 0&0&\frac12&0&-\frac{1}{\sqrt{2}}&0
 \end {array}
 \right] \quad
D_{\mathbf{e}_4}=D_{\mathbf{e}_5}=D_{\mathbf{e}_6}= \left[
 \begin {array}{cccccc} 
 0&0&0&0&0&0\\
 0&0&0&0&0&0\\
 0&0&0&0&0&0\\
 0&0&0&0&0&0\\
 0&0&0&0&0&0\\
 0&0&0&0&0&0
 \end {array}
 \right]
\end{split}
\]
is flat and Hermitian, so we can apply Theorem \ref{tangent} and construct the tangent Lie algebra $T_D\,\g$ over $\mathfrak{su}(2)\times \mathfrak{su}(2)$.

\end{example}

\bigskip

\nocite{*}

\end{document}